\newtheorem{thm}{Theorem}[section]
\newtheorem{lem}[thm]{Lemma}
\theoremstyle{definition}
\theoremstyle{remark}
\newtheorem{rem}[thm]{Remark}
\numberwithin{equation}{section}
\newcommand{\R}{\mathbb{R}}
\begin{document}

\title[ ]{Sturm--Picone theorem for fractional nonlocal equations}
\author[J.\,Tyagi ]
{ J.Tyagi }

\address{J.\,Tyagi \hfill\break
 Discipline of Mathematics \newline
 Indian Institute of Technology Gandhinagar \newline
 Palaj, Gandhinagar \newline
 Gujarat, India - 382355}
 \email{jtyagi@iitgn.ac.in, jtyagi1@gmail.com}

\thanks{Submitted 01--11--2018.  Published-----.}
\subjclass[2010]{Primary 35J25;  Secondary 35J60.}
\keywords{Fractional Laplacian;\,variational methods;\,Leighton's variational lemma; Sturm--Picone comparison theorem}
\begin{abstract}
In this paper, we establish a generalization of  Sturm--Picone comparison theorem for a pair of fractional nonlocal  equations:
\begin{eqnarray*}
 \begin{gathered}
 (-div. (A_1(x)\nabla))^{s} u =  C_{1}(x) u \,\,\,\mbox{in}\,\,\Omega,\\
 u   = 0 \,\,\,\,\mbox{on}\,\,\,\,\,\,\,\partial \Omega,
\end{gathered}
 \end{eqnarray*}
and
 \begin{eqnarray*}
 \begin{gathered}
 (-div. (A_2(x)\nabla))^{s} v =  C_{2}(x) v \,\,\,\mbox{in}\,\,\Omega,\\
 v   = 0 \,\,\,\,\mbox{on}\,\,\,\,\,\,\,\partial \Omega,
\end{gathered}
 \end{eqnarray*}
where $\Omega\subset \R^n$ is an open bounded subset with smooth boundary, $0<s<1,\,\,A_1,\,A_2$ are real symmetric and positive definite matrices
on $\Omega$ with continuous entries on $\overline{\Omega}$ and $C_{1}, C_{2}\in C(\overline{\Omega}).$ 
\end{abstract}
\maketitle
\section{Introduction}
In this paper, we are interested to generalize Sturm--Picone comparison theorem for a pair of fractional nonlocal  equations:
\begin{eqnarray}\label{no1}
 \begin{gathered}
 (-div. (A_1(x)\nabla))^{s} u =  C_{1}(x) u \,\,\,\mbox{in}\,\,\Omega,\\
 u   = 0 \,\,\,\,\mbox{on}\,\,\,\,\,\,\,\partial \Omega,
\end{gathered}
 \end{eqnarray}
and
 \begin{eqnarray}\label{no2}
 \begin{gathered}
 (-div. (A_2(x)\nabla))^{s} v =  C_{2}(x) v \,\,\,\mbox{in}\,\,\Omega,\\
 v   = 0 \,\,\,\,\mbox{on}\,\,\,\,\,\,\,\partial \Omega,
\end{gathered}
 \end{eqnarray}
where $\Omega\subset \R^n$ is an open bounded subset with smooth boundary, $0<s<1,\,\,A_1,\,A_2$ are real symmetric and positive definite matrices
on $\Omega$ with continuous entries on $\overline{\Omega}$ and $C_{1}, C_{2}\in C(\overline{\Omega}).$ The nonlocal fractional operator $(-div. (A(x)\nabla))^{s} u,$ 
where $A$ is a real symmetric matrix is defined next.

Let us recall briefly the earlier developments on this subject which have played important roles in the qualitative theory of differential equations.
In 1836, Sturm\,\cite{sturm} established the first important comparison theorem which deals with a pair of linear ODEs
\begin{eqnarray}
lx\equiv(p_1(t)x'(t))'+ q_1(t)x(t)=0.\label{1}\,\,\\
Ly\equiv(p_2(t)y'(t))'+ q_2(t)y(t)=0,\label{2}\,
\end{eqnarray}
on a bounded interval $(t_1,\,t_2),$ where $p_1,\,p_2,\,q_1,\,q_2$ are real-valued continuous functions and 
$p_1(t)>0,\,p_2(t)>0$ on $[t_1,\,t_2]\subset (0, \infty).$ The original Sturm's comparison theorem \cite{sturm} reads as 
\begin{thm}\rm{(Sturm's comparison theorem)}\label{th.1}
Suppose $p_1(t)=p_2(t)$ and $q_1(t)>q_2(t),\,\forall\,t\in(t_1,\,t_2).$ If there exists a nontrivial real solution $y$ of 
\eqref{2} such that $y(t_1)=0=y(t_2),$ then every real solution of \eqref{1}  has at least  one zero in $(t_1,\,t_2).$ 
\end{thm}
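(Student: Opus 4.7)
The plan is a proof by contradiction, built around the classical cross-Wronskian identity (which is the $p_1=p_2$ case of Picone's identity). Suppose, toward a contradiction, that some real solution $x$ of \eqref{1} has no zero in $(t_1,t_2)$. First make two harmless reductions: if $y$ happens to have zeros in the open interval, shrink $(t_1,t_2)$ to a subinterval whose endpoints are two consecutive zeros of $y$; then, after possibly replacing $x$ by $-x$ and $y$ by $-y$, arrange that $x>0$ and $y>0$ on the resulting open interval.

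Next, introduce the cross-Wronskian
\[
W(t)=p(t)\bigl[x(t)y'(t)-y(t)x'(t)\bigr],\qquad p:=p_1=p_2,
\]
and differentiate, using \eqref{1} and \eqref{2} in the form $(px')'=-q_1 x$ and $(py')'=-q_2 y$:
\[
W'(t)=x(t)(py')'(t)-y(t)(px')'(t)=\bigl(q_1(t)-q_2(t)\bigr)x(t)y(t).
\]
By hypothesis $q_1>q_2$, and by the sign reductions $x,y>0$ on $(t_1,t_2)$, so $W'>0$ on the open interval. Integrating yields $W(t_2)-W(t_1)>0$.

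Now I would compare this with the boundary information. Since $y(t_1)=0$ and $y>0$ on $(t_1,t_2)$, one has $y'(t_1)\ge 0$, and similarly $y'(t_2)\le 0$. In the generic case $x(t_1),x(t_2)>0$, this yields
\[
W(t_1)=p(t_1)x(t_1)y'(t_1)\ge 0,\qquad W(t_2)=p(t_2)x(t_2)y'(t_2)\le 0,
\]
so $W(t_2)-W(t_1)\le 0$, contradicting the strict positivity obtained above.

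The step requiring genuine care, and the main obstacle, is the boundary case $x(t_1)=0$ or $x(t_2)=0$. Here uniqueness for the linear ODE \eqref{1} saves the day: a nontrivial solution of \eqref{1} cannot have a double zero, so $x(t_i)=0$ forces $x'(t_i)\ne 0$, with its sign dictated by $x>0$ just inside the interval; a parallel remark applies to $y'$ at the endpoints. Tracking these signs in $W(t_1)$ and $W(t_2)$ preserves $W(t_1)\ge 0$ and $W(t_2)\le 0$, so the contradiction persists. An equivalent packaging of the same idea is Picone's identity: multiplying \eqref{1} by $y^2/x$ and \eqref{2} by $y$, subtracting, and integrating by parts (the $y$-boundary terms vanish) yields
\[
\int_{t_1}^{t_2}p\left(\frac{yx'-xy'}{x}\right)^{\!2}dt=\int_{t_1}^{t_2}\bigl(q_2(t)-q_1(t)\bigr)y(t)^{2}\,dt,
\]
whose left-hand side is nonnegative while the right-hand side is strictly negative, the same contradiction in integrated form.
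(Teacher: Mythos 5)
The paper does not actually prove Theorem~\ref{th.1}: it is quoted as Sturm's classical 1836 result with a citation to \cite{sturm} and serves only as motivating background for the fractional nonlocal development. So there is no ``paper's own proof'' to compare against; what follows is simply an assessment of your argument.

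Your argument is correct and is the standard classical proof. The computation $W'=(q_1-q_2)xy$ is right (one sees it most cleanly from $W=x(py')-y(px')$, so that $W'=x(py')'-y(px')'$), and after the sign normalizations this gives $W$ strictly increasing on $(t_1,t_2)$. The endpoint inequalities $W(t_1)\ge 0\ge W(t_2)$ then produce the contradiction. Two small remarks. First, the boundary case you flag as ``requiring genuine care'' is actually handled automatically by the Wronskian form: since $y(t_1)=0$ one has $W(t_1)=p(t_1)x(t_1)y'(t_1)$, and $x(t_1)\ge 0$ follows just from continuity and $x>0$ in the interior, so $W(t_1)\ge 0$ without invoking the no-double-zero fact; the same at $t_2$. (It is still worth noting that $y'(t_1)>0$ strictly, by uniqueness for the linear ODE, though the weak inequality already suffices.) Second, the ``equivalent packaging'' via Picone's identity
\[
\int_{t_1}^{t_2}p\left(\frac{yx'-xy'}{x}\right)^{\!2}dt=\int_{t_1}^{t_2}\bigl(q_2-q_1\bigr)y^{2}\,dt
\]
is correct as written only when $x$ is bounded away from zero on the closed interval $[t_1,t_2]$; if $x$ vanishes at an endpoint the left side has a potential singularity and one must pass to a limit over shrinking subintervals, whereas the Wronskian route needs no such regularization. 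You essentially acknowledge this, but it is the main reason the Wronskian version is preferable here.
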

In 1909, Picone\,\cite{picone} modified Sturm's theorem. The modification reads as 
\begin{thm}\rm{(Sturm--Picone theorem)}\label{th.2}
Suppose that $p_2(t)\geq p_1(t)$ and $q_1(t)\geq q_2(t),\,\forall\,t\in(t_1,\,t_2).$ If there exists a nontrivial real 
solution $y$ of \eqref{2} such that $y(t_1)=0=y(t_2),$ then every real solution of \eqref{1} unless a constant multiple of $y$ has at 
least  one zero in $(t_1,\,t_2).$ 
\end{thm}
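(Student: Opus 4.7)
The plan is to prove Theorem~\ref{th.2} via the classical Picone identity. The argument proceeds in three stages.

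First, I would derive the following differential identity by direct calculation. For any solutions $x$ of \eqref{1} and $y$ of \eqref{2}, at any point where $x \neq 0$,
\begin{equation*}
\frac{d}{dt}\left[\frac{y^{2} p_{1} x'}{x} - p_{2} y y'\right] = (q_{2} - q_{1})\,y^{2} + (p_{1} - p_{2})(y')^{2} - p_{1}\left(y' - \frac{y x'}{x}\right)^{2}.
\end{equation*}
This follows by expanding the derivative on the left with the product and quotient rules, using $(p_{1}x')' = -q_{1}x$ and $(p_{2}y')' = -q_{2}y$ to eliminate the second-order terms, and completing the square in $y'$.

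Next, I would argue by contradiction. Let $x$ be a real solution of \eqref{1} that has no zero on the open interval $(t_{1},t_{2})$ and is not a constant multiple of $y$; by homogeneity we may take $x > 0$ on $(t_{1},t_{2})$. Integrating Picone's identity from $t_{1}$ to $t_{2}$, the boundary contribution
\begin{equation*}
\left[\frac{y^{2} p_{1} x'}{x} - p_{2} y y'\right]_{t_{1}}^{t_{2}}
\end{equation*}
must be shown to vanish. The term $p_{2}yy'$ vanishes trivially at both endpoints because $y(t_{1})=y(t_{2})=0$. The term $y^{2}p_{1}x'/x$ vanishes where $x$ is nonzero for the same reason; the only delicate case is when $x$ also vanishes at an endpoint, where L'H\^opital's rule combined with the Picard uniqueness theorem (which ensures the zero of a nontrivial solution is simple) yields $y^{2}/x \to 0$.

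After the boundary term is discarded, the integrated identity reads
\begin{equation*}
0 = \int_{t_{1}}^{t_{2}}\!(q_{2} - q_{1})\,y^{2}\,dt + \int_{t_{1}}^{t_{2}}\!(p_{1} - p_{2})(y')^{2}\,dt - \int_{t_{1}}^{t_{2}}\! p_{1}\!\left(y' - \frac{y x'}{x}\right)^{\!2}\!dt.
\end{equation*}
Under the hypotheses $p_{2} \geq p_{1}$, $q_{1} \geq q_{2}$, and $p_{1} > 0$, each term on the right is non-positive, so each must vanish identically. The vanishing of the last term forces $(y/x)' \equiv 0$ on $(t_{1},t_{2})$, so $y = c x$ for some constant $c$; nontriviality of $y$ gives $c \neq 0$, and hence $x = c^{-1} y$, contradicting the hypothesis that $x$ is not a constant multiple of $y$.

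I expect the principal technical obstacle to be the boundary analysis in the second stage: controlling $y^{2} p_{1} x' / x$ at a common zero of $x$ and $y$ relies on the simple-zero property from linear ODE uniqueness, after which L'H\^opital closes the argument. Everything else is routine differentiation, completion of squares, and elementary integration, and the same template is what I would later try to adapt to the fractional nonlocal setting of \eqref{no1}--\eqref{no2}.
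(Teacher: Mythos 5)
Your proof is correct, and it is the classical Picone\,\cite{picone} argument; the identity you state is the standard Picone identity, the differentiation and completion of squares check out, and the endpoint analysis (simple zeros via ODE uniqueness plus L'H\^opital) correctly handles the only delicate case where $x$ and $y$ share a boundary zero. One small point worth recording for cleanliness: near a shared boundary zero the factor $y' - yx'/x = (y'x - yx')/x$ is $O(t-t_1)$ rather than merely bounded, because $y'x-yx'$ has a double zero there, so the last integral is harmlessly convergent.

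However, the paper does not actually prove Theorem~\ref{th.2}. It is cited as a classical background result, and the paper's point is structural: Theorem~\ref{th.2} is noted to be a special case of Leighton's comparison theorem (Theorem~\ref{th.3}), which in turn rests on Leighton's variational lemma (Lemma~\ref{th.4}). This is precisely the template the paper follows for its new results in the fractional nonlocal setting: Lemma~\ref{lto} (the variational lemma) is proved first, then Theorem~\ref{lego} (Leighton-type), then Theorem~\ref{stm} (Sturm--Picone) drops out by checking $V(U)\geq 0$. Your proposal takes a direct route through the pointwise Picone identity and integration; the paper's route is modular, passing through a quadratic-functional comparison first. That said, the two are not as different as they look: the proof of the variational lemma (see the identity~\eqref{id1} in the proof of Lemma~\ref{lto}) is itself a Picone-type identity in disguise, with $X^i = v\,D_i(U/v)$ playing the role of your $y'-yx'/x$. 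What the variational decomposition buys is generality --- one gets the full Leighton theorem, of which Sturm--Picone is a corollary --- and a formulation that transports naturally to higher dimensions and to the Caffarelli--Silvestre extension, where simple-zero and L'H\^opital arguments at the boundary would be far more delicate than in one dimension. If you intended to adapt your direct argument to \eqref{no1}--\eqref{no2}, you would in practice be reconstructing the variational lemma, so it is worth internalizing that route.
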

In 1962, Leighton\,\cite{leig} proved a comparison theorem to the above pair of Equations \eqref{1}--\eqref{2}. He showed that Sturm 
and  Sturm-Picone theorems may be regarded as special cases of this theorem. 
In order to prove his theorem, he defined the quadratic functionals associated with \eqref{1} and \eqref{2} as follows:
\begin{eqnarray}
j(u)=\int_{t_1}^{t_2} [p_1(t)(u'(t))^2 -  q_{1}(t) (u(t))^2] dt.\nonumber\\
J(u)=\int_{t_1}^{t_2} [p_2(t)(u'(t))^2 -  q_{2}(t)  (u(t))^2] dt,\nonumber
\end{eqnarray}
where the domain $D$ of $j$ and $J$ is defined to be the set of all real-valued functions $u \in C^1[t_1,\,t_2]$ such 
that $u(t_1)= u(t_2)=0\,\,(t_1,\,t_2\,\mbox{are consecutive zeros of}\, u).$
The variation of $j(u)$ is defined as $V(u)= J(u)-j(u),\,\,$ i.e.,
\begin{equation}
V(u)= \int_{t_1}^{t_2} [(p_2(t)-p_{1}(t))(u'(t))^2 + ( q_1(t)- q_{2}(t) ) (u(t))^2] dt.\label{eq3}
\end{equation}
Now, Leighton's theorem reads as follows:
\begin{thm}\rm{(Leighton's theorem)}\label{th.3}
Suppose there exists a nontrivial real solution $u$ of $Lu=0$ in $(t_1, t_2)$ such that $u(t_1)=u(t_2)=0$ and $V(u)\geq 0,$ then every real solution of
$lv=0$ unless a constant multiple of $u$ has at 
least  one zero in $(t_1,\,t_2).$ 
\end{thm}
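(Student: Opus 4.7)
The plan is to argue by contradiction. Suppose some real solution $v$ of $lv=0$ on $(t_1,t_2)$ is not a constant multiple of $u$ and yet has no zero in $(t_1,t_2)$. The strategy is to trap $j(u)$ between two opposite inequalities, force $j(u)=0$, and from this deduce that $u$ must be a scalar multiple of $v$, contradicting the standing assumption.

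For the first inequality, I would exploit the hypotheses on $u$. Since $u$ solves $Lu=0$ and vanishes at $t_1,t_2$, a single integration by parts gives
\begin{equation*}
J(u)=\int_{t_1}^{t_2}\bigl[p_2(u')^2-q_2 u^2\bigr]dt=-\int_{t_1}^{t_2} u\bigl[(p_2 u')'+q_2 u\bigr]dt=0.
\end{equation*}
Combining this with the assumption $V(u)=J(u)-j(u)\geq 0$ from \eqref{eq3}, I immediately obtain $j(u)\leq 0$.

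For the reverse inequality, I would invoke a Picone-type identity. Assuming provisionally that $v$ is nonzero on all of $[t_1,t_2]$, differentiation of $u^2 p_1 v'/v$ combined with the identity $(p_1 v')'=-q_1 v$ coming from $lv=0$ produces
\begin{equation*}
\frac{d}{dt}\!\left[\frac{u^2 p_1 v'}{v}\right]=2uu'\frac{p_1 v'}{v}-q_1 u^2-\frac{u^2 p_1(v')^2}{v^2}.
\end{equation*}
Integrating over $[t_1,t_2]$, the left-hand side vanishes because $u(t_1)=u(t_2)=0$. Solving the resulting relation for $\int q_1 u^2\,dt$ and substituting into the definition of $j(u)$ completes the square to yield
\begin{equation*}
j(u)=\int_{t_1}^{t_2} p_1\!\left(u'-\frac{uv'}{v}\right)^{\!2}dt\geq 0.
\end{equation*}
Together with $j(u)\leq 0$ this forces $j(u)=0$, hence $u'v-uv'\equiv 0$, and integrating gives $u\equiv cv$ for some constant $c$, which is the sought contradiction.

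The genuine obstacle is that the hypothesis only guarantees $v\neq 0$ on the \emph{open} interval $(t_1,t_2)$; if $v$ vanishes at one of the endpoints, the quotient $v'/v$ is singular there and the boundary term in the Picone computation needs justification. My plan for this step is to carry out the integration by parts on the truncated interval $[t_1+\eps,t_2-\eps]$ and then let $\eps\to 0^+$: near a simple zero of $v$ (and $v'$ is nonzero there, since $v$ is a nontrivial solution of a second-order linear ODE), the factor $u^2$, which vanishes to first order at the endpoint, compensates for the first-order singularity of $v'/v$, so $u^2 p_1 v'/v\to 0$ and the identity survives the limit. Apart from this endpoint analysis, the argument is otherwise a routine variational comparison.
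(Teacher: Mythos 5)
Your argument is correct, and it is exactly the classical route the paper points to without spelling out. Note that the paper does not give a proof of Theorem~\ref{th.3}; it is Leighton's 1962 theorem, recorded as background, and the text immediately following it only remarks that the proof ``heavily depends on'' the variational lemma (Lemma~\ref{th.4}). Your write-up carries out that strategy in full: integration by parts using $Lu=0$ and $u(t_1)=u(t_2)=0$ gives $J(u)=0$; then $V(u)=J(u)-j(u)\ge 0$ yields $j(u)\le 0$; and the Picone-identity computation gives $j(u)\ge 0$ for any solution $v$ of $lv=0$ that is nowhere zero on $(t_1,t_2)$, forcing $j(u)=0$ and hence $u\equiv cv$. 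In effect you have also proved the lower-case version of Lemma~\ref{th.4} (with $j,\ l$ in place of $J,\ L$) rather than merely citing it, which is a perfectly legitimate and self-contained presentation.

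Two small remarks on the endpoint analysis, which you correctly identify as the only delicate point. First, the phrase ``the factor $u^2$, which vanishes to first order'' should read \emph{second} order: $u(t_1)=0$ with $u$ a nontrivial solution means $u$ has a simple zero, so $u^2$ vanishes to order two, which more than cancels the simple pole of $v'/v$ when $v(t_1)=0$ and $v'(t_1)\neq 0$; the conclusion $u^2 p_1 v'/v\to 0$ is of course unchanged. Second, for the completed-square integrand, the same local expansion shows that $u'-uv'/v$ stays bounded as $t\to t_1^+$ (it tends to $u'(t_1)-u'(t_1)=0$ via L'H\^{o}pital when $v(t_1)=0$), so the integral $\int_{t_1}^{t_2}p_1\left(u'-uv'/v\right)^2\,dt$ is a genuine convergent integral and the limit in $\eps$ of the truncated identities can be taken without further issue. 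With those observations the proposal is complete and correct.
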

It is easy to see that Theorems\,\ref{th.1}\,and \,\ref{th.2} are special cases of Leighton's theorem. We point out that the proof of Leighton's theorem heavily depends on a lemma
so-called Leighton's variational lemma, which is stated as follows:
\begin{lem}\rm{(Leighton's variational lemma)}\label{th.4}
If there exists a function $u\in D,$ not identically zero, such that  $J(u)\leq 0,$ then every real solution of 
$Lv=0$  except a constant multiple of $u$ vanishes at some point of $(t_1, t_2).$  
\end{lem}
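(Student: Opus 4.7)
The plan is to argue by contradiction using the classical Picone identity. Suppose that $v$ is a real solution of $Lv=0$ which has no zero in the open interval $(t_1,t_2)$ and is not a constant multiple of $u$; I will derive a contradiction with the hypothesis $J(u)\leq 0$. Since $v$ is continuous and nonvanishing on $(t_1,t_2)$, it has constant sign there, so the quotient $u^2/v$ is well defined on $(t_1,t_2)$ and the auxiliary function $w(t)=u^2(t)p_2(t)v'(t)/v(t)$ makes sense.

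First I would record the Picone-type identity: a direct differentiation, using $(p_2 v')'=-q_2 v$, gives
\begin{equation*}
\frac{d}{dt}\!\left(\frac{u^2\,p_2\,v'}{v}\right)
= p_2(u')^2 - q_2 u^2 - p_2\!\left(u' - \frac{u\,v'}{v}\right)^{\!2}
\end{equation*}
pointwise on any subinterval where $v\neq 0$. Integrating this identity from $t_1$ to $t_2$ and invoking $u(t_1)=u(t_2)=0$, the boundary contributions vanish (see the next step), leaving
\begin{equation*}
0 \;=\; J(u) \;-\; \int_{t_1}^{t_2} p_2(t)\!\left(u'(t) - \frac{u(t)v'(t)}{v(t)}\right)^{\!2} dt.
\end{equation*}
Combined with $J(u)\leq 0$ this forces $J(u)=0$ and $u'v-uv'\equiv 0$ on $(t_1,t_2)$, i.e.\ $(u/v)'=0$, so $u$ is a constant multiple of $v$, contradicting the assumption.

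The one place that needs genuine care is the justification of the vanishing boundary terms, and this is the step I expect to be the main obstacle. If $v(t_1)\neq 0$ then $u^2 p_2 v'/v\to 0$ as $t\to t_1^+$ simply because $u(t_1)=0$. If instead $v(t_1)=0$, one uses uniqueness for the Cauchy problem to conclude $v'(t_1)\neq 0$ (otherwise $v\equiv 0$); then $v(t)\sim v'(t_1)(t-t_1)$ and $u(t)=O(t-t_1)$, whence $u^2/v=O(t-t_1)\to 0$. The same argument applies at $t_2$. A subsidiary point is that the identity is initially derived on compact subintervals of $(t_1,t_2)$; one integrates on $[t_1+\eps,t_2-\eps]$ and passes to the limit $\eps\to 0$, using the boundary estimates just mentioned together with the continuity of $p_2$ and $q_2$ to justify the limit of the nonnegative integral on the right.

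Finally, I would observe that the two options in the conclusion are genuinely exclusive: if $u$ were \emph{not} a constant multiple of $v$ but every candidate solution $v$ still failed to vanish in $(t_1,t_2)$, the above chain of equalities would apply to each such $v$ and force $u$ to be proportional to it, which is impossible. This yields the statement as formulated.
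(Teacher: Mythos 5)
The paper does not itself prove Lemma~\ref{th.4}; it is quoted as Leighton's classical result \cite{leig} and appears in the introduction only as background and motivation for the fractional version developed later. Your argument via the Picone identity is the standard proof of this lemma and is correct: the differentiation identity (using only $(p_2v')'=-q_2v$ for $v$) is right, the vanishing of the boundary term $u^2p_2v'/v$ is handled with the needed care in the case $v(t_1)=0$ (uniqueness gives $v'(t_1)\neq 0$, hence $u^2/v$ is $O(t-t_1)$ and the field $u'-uv'/v$ stays bounded), the passage to the limit $\eps\to 0$ from the compact subintervals is justified, and the dichotomy---either $J(u)>0$, or $J(u)=0$ together with $(u/v)'\equiv 0$ on $(t_1,t_2)$---is spelled out and yields exactly the contrapositive of the stated conclusion. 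It is also, in effect, the one-dimensional template for the proof the paper does give of its fractional analogue, Lemma~\ref{lto}: the identity \eqref{id1} for $G(U,v)$ plays the role your Picone identity $\frac{d}{dt}\bigl(u^2p_2v'/v\bigr)$ plays, the Green-theorem reduction of $M(U)$ to a nonnegative quadratic form replaces your reduction of $J(u)$ to $\int p_2\bigl(u'-uv'/v\bigr)^2\,dt$, and the equality case again forces proportionality of the test function and the assumed nonvanishing solution.
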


We refer to a very recent work \cite{ghat}, where the authors consider a pair of equations of the form
$$−(p(u'+su))'+rp(u'+su)+qu=0 $$
on a finite interval, where $1/p, r, s$ and $q$ are real integrable functions. They established a generalization of Leighton's  comparison theorem for these equations and as  
special cases, they provide a generalization of a Sturm-Picone-type theorem and a generalization of a Sturm-type separation theorem.

We mention that most of the above comparison theorems have been extended to a pair of linear elliptic partial differential equations of type
\begin{eqnarray}
lu\equiv  \sum_{i, j=1}^{n} D_{i}(a_{ij} D_{j}u) + c u=0.\label{el1}\,\,\\
Lv\equiv   \sum_{i, j=1}^{n} D_{i}(A_{ij} D_{j}v) + C v=0,\label{el2}\,
\end{eqnarray}
in $\Omega\subset \R^{n},$ where $\Omega$ is a bounded domain with smooth boundary, $a_{ij}, A_{ij}, c, C$ are real and continuous on $\overline{\Omega}$ and
the matrices $a_{ij}$ and $A_{ij}$ are symmetric and positive definite in $\Omega.$

In 1955, Hartman and Wintner\,\cite{har} extended Sturm--Picone theorem (Theorem\,\ref{th.2}) to \eqref{el1}--\eqref{el2} and their theorem reads as follows:

\begin{thm}\label{th.5}
Let $\Omega\subset \R^{n}$ be a bounded domain whose boundary has a piecewise continuous unit normal. 
Suppose $a_{ij}- A_{ij}$ is positive semidefinite and $C\geq c$ on $\overline{\Omega}.$ 
If there exists a nontrivial solution $u$ of $lu=0$ in $\Omega$ such that $u=0$ on $\partial \Omega,$ then every solution of $Lv=0$ vanishes at some point of $\overline{\Omega}.$
\end{thm}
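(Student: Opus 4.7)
The plan is to argue by contradiction using a divergence-form Picone identity.

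Suppose, toward a contradiction, that a solution $v$ of $Lv = 0$ has no zero on $\overline{\Omega}$. By continuity and connectedness I may assume $v > 0$ on $\overline{\Omega}$. I then wish to exploit the Picone-type identity
\[
(A\nabla u)\cdot \nabla u - \nabla \cdot\!\left(\tfrac{u^{2}}{v}\, A\nabla v\right) = A\!\left(\nabla u - \tfrac{u}{v}\nabla v\right)\!\cdot\!\left(\nabla u - \tfrac{u}{v}\nabla v\right) - \tfrac{u^{2}}{v}\,\nabla\!\cdot\!(A\nabla v),
\]
whose verification is a direct symbolic expansion using symmetry of $A$. Since $Lv = 0$ gives $\nabla\!\cdot\!(A\nabla v) = -C v$, the last term becomes $C u^{2}$; integrating over $\Omega$ and applying the divergence theorem, the boundary contribution $\int_{\partial\Omega} (u^{2}/v)(A\nabla v)\!\cdot\!\nu\,dS$ vanishes because $u = 0$ on $\partial\Omega$. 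This yields
\[
J[u] := \int_\Omega \!\left[(A\nabla u)\!\cdot\!\nabla u - C u^{2}\right] dx \;=\; \int_\Omega A\!\left(\nabla u - \tfrac{u}{v}\nabla v\right)\!\cdot\!\left(\nabla u - \tfrac{u}{v}\nabla v\right) dx \;\geq\; 0.
\]

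On the other hand, multiplying $lu = 0$ by $u$, integrating by parts, and using $u = 0$ on $\partial\Omega$ produces
\[
j[u] := \int_\Omega \!\left[(a\nabla u)\!\cdot\!\nabla u - c u^{2}\right] dx \;=\; 0.
\]
The hypotheses that $a_{ij} - A_{ij}$ is positive semidefinite and $C \geq c$ on $\overline{\Omega}$ immediately give $j[u] - J[u] \geq 0$, so $0 = j[u] \geq J[u] \geq 0$, and both inequalities must be equalities; in particular $J[u] = 0$.

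Finally, since $A$ is positive definite, $J[u] = 0$ forces $\nabla u = (u/v)\nabla v$ almost everywhere on $\Omega$, i.e.\ $\nabla(u/v) \equiv 0$. Connectedness of $\Omega$ then gives $u = \lambda v$ for some constant $\lambda$; but $u = 0$ on $\partial\Omega$ while $v > 0$ there, forcing $\lambda = 0$ and $u \equiv 0$, which contradicts the nontriviality of $u$. I expect the main obstacle to be technical rather than conceptual: ensuring enough regularity of $u$ and $v$ (from standard elliptic regularity under the smoothness assumptions on $\Omega$ and the coefficients) to validate the Picone identity pointwise and the integrations by parts, and controlling $(u^{2}/v)\,A\nabla v$ near $\partial\Omega$---a point that the hypothesis $v \neq 0$ on $\overline{\Omega}$ was introduced precisely to sidestep.
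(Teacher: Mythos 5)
Your argument is correct; it is the classical Picone-identity proof of this Hartman--Wintner result. The paper does not actually reprove the statement (it is quoted from Hartman and Wintner and then observed to be a special case of the Clark--Swanson/Leighton theorem, which in turn rests on the $n$-dimensional Leighton variational lemma, itself established by exactly the pointwise Picone identity you write down and the same identity the author adapts in the proof of Lemma 3.1 to the degenerate extension problem), so your direct argument is essentially the same mechanism with the Leighton-lemma layer stripped away.
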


In 1965, Clark and Swanson \cite{cla} obtained a analog of Leighton's theorem (Theorem\,\ref{th.3}) using the variation of $lu,$ which is defined as

$$ V(u)=  \int_{\Omega}  \left[\sum_{i, j=1}^{n} (a_{ij}- A_{ij}) D_{i}u D_{j} u + (C-c) u^{2} \right] dx.    $$

Their theorem reads as follows:

\begin{thm}\label{th.6}
Let $\Omega\subset \R^{n}$ be a bounded domain whose boundary has a piecewise continuous unit normal. Suppose $a_{ij}- A_{ij}$ is positive semidefinite and 
$C\geq c$ on $\overline{\Omega}.$ 
If there exists a nontrivial solution $u$ of $lu=0$ in $\Omega$ such that $u=0$ on $\partial \Omega$ and $V(u)\geq 0,$ then every solution of $Lv=0$ vanishes at some point of
$\overline{\Omega}.$
\end{thm}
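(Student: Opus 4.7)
The plan is to carry Leighton's variational argument from the ODE case over to the elliptic PDE setting by coupling a simple energy identity with the Picone identity for the divergence-form operator associated with $L$. I introduce the two Dirichlet-type quadratic forms
$$j(w)=\int_{\Omega}\Big[\sum_{i,j=1}^{n} a_{ij}\,D_i w\,D_j w - c\,w^{2}\Big]\,dx,\qquad J(w)=\int_{\Omega}\Big[\sum_{i,j=1}^{n} A_{ij}\,D_i w\,D_j w - C\,w^{2}\Big]\,dx,$$
defined on the class of admissible functions vanishing on $\partial\Omega$, and I observe that $V(w)=j(w)-J(w)$ in view of the definition of $V$ recalled above.

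The first step is to test $lu=0$ against $u$ itself: multiplying and integrating over $\Omega$, the divergence theorem together with $u\big|_{\partial\Omega}=0$ eliminates the boundary term and yields $j(u)=0$. Combined with the hypothesis $V(u)\ge 0$, this gives the key energy bound
$$J(u)\le 0.$$
The next step is a contradiction argument. Suppose some solution $v$ of $Lv=0$ is nowhere zero on $\overline{\Omega}$. By continuity and connectedness, $v$ has constant sign; replacing $v$ by $-v$ if necessary and using compactness of $\overline{\Omega}$, one has $v\ge\delta>0$ on $\overline{\Omega}$. Then $\varphi=u^{2}/v$ is $C^{1}(\overline{\Omega})$ and vanishes on $\partial\Omega$, so it is an admissible test function.

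Multiplying $Lv=0$ by $\varphi$, integrating over $\Omega$, and using the integration-by-parts identity together with the pointwise Picone-type identity
$$\sum_{i,j} A_{ij}\Big(D_i u-\tfrac{u}{v}D_i v\Big)\Big(D_j u-\tfrac{u}{v}D_j v\Big)=\sum_{i,j}A_{ij}\,D_i u\,D_j u-\sum_{i,j}D_i\!\Big(\tfrac{u^{2}}{v}\Big)A_{ij}\,D_j v,$$
one rewrites $J(u)$ as the integral of the left-hand side. Positive definiteness of $(A_{ij})$ makes the integrand nonnegative, so $J(u)\ge 0$, with equality if and only if $D_i u=(u/v)D_i v$ for all $i$, i.e.\ $u/v$ is locally constant. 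Combined with $J(u)\le 0$ from the first step, this forces $J(u)=0$, hence $u/v$ constant on each component of $\Omega$; since $u=0$ on $\partial\Omega$ while $v>0$ there, the constant must vanish and $u\equiv 0$, contradicting nontriviality of $u$. Therefore every solution $v$ of $Lv=0$ must vanish somewhere in $\overline{\Omega}$.

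The main technical hurdle is the integration-by-parts step with $\varphi=u^{2}/v$: one must legitimately place this function in the natural energy space and control the boundary integral. This is precisely where the piecewise continuous unit normal on $\partial\Omega$ and the strict lower bound $v\ge\delta>0$ on $\overline{\Omega}$ (forced by compactness once $v$ is assumed nonvanishing) enter decisively. Once these are secured, the pointwise Picone identity is an algebraic consequence of the symmetry and positive definiteness of $A$, and the remainder of the proof is formal manipulation.
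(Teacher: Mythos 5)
Your proof is correct and matches the paper's intended route: the paper states this as Clark--Swanson's result and indicates its proof via the $n$-dimensional Leighton variational lemma (Lemma~\ref{th.7}), which is precisely what you do after reducing $V(u)\ge 0$ together with $j(u)=0$ (from testing $lu=0$ against $u$) to $J(u)\le 0$; you then inline the Picone-identity argument that constitutes the proof of that lemma. One small remark: the boundary term in your integration by parts with $\varphi=u^{2}/v$ vanishes simply because $\varphi=0$ on $\partial\Omega$ once $v$ is bounded away from zero; the piecewise continuous unit normal hypothesis is what licenses the divergence theorem on $\Omega$ in the first place rather than being needed to kill that particular term.
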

Again, it is easy to see that Theorem\,\ref{th.5} is a special case of Theorem\,\ref{th.6} and the proof of Theorem\,\ref{th.6} depends on the following $n$-dimensional version of 
Lemma\,\ref{th.4}. Let us define the quadratic functional associated with \eqref{el2}:

$$M(u)=  \int_{\Omega}  \left[\sum_{i, j=1}^{n} (A_{ij} D_{i}u D_{j} u - C u^{2} \right] dx,    $$
where the domain $\mathcal{D}$ of $M$ is defined to be the set of all real-valued continuous functions on $\overline{\Omega}$ which vanish on the 
boundary and have uniformly continuous first partial derivatives in $\Omega.$

\begin{lem}\rm{($n$-dimensional version of Leighton's variational lemma)}\label{th.7}
If there exists $u \in \mathcal{D}$ not identically zero such that $M(u)\leq 0,$ then every solution $v$ of $Lv=0$ vanishes at some point of $\overline{\Omega}.$
\end{lem}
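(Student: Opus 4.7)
The natural approach is to prove the lemma by contradiction using a \emph{Picone-type identity} adapted to the variable coefficient divergence operator in \eqref{el2}. Suppose, for contradiction, that some solution $v$ of $Lv=0$ does not vanish anywhere on $\overline{\Omega}$. Since $v$ is continuous on the compact set $\overline{\Omega}$, it is of constant sign and bounded away from zero; without loss of generality $v\geq \delta>0$. Then for any $u\in \mathcal{D}$ the quotient $u^2/v$ is well defined on $\overline{\Omega}$, has uniformly continuous first partials in $\Omega$, and vanishes on $\partial\Omega$ because $u$ does.

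The key algebraic step is the pointwise identity
\begin{equation*}
\sum_{i,j=1}^n A_{ij}\,D_i u\, D_j u \;-\; \sum_{i,j=1}^n A_{ij}\, D_j v\, D_i\!\left(\tfrac{u^2}{v}\right) \;=\; \sum_{i,j=1}^n A_{ij}\!\left(D_i u - \tfrac{u}{v}D_i v\right)\!\left(D_j u - \tfrac{u}{v}D_j v\right),
\end{equation*}
which follows by expanding $D_i(u^2/v)=2uD_i u/v - u^2 D_i v/v^2$ and grouping terms; symmetry of $A_{ij}$ is used in the collection of the cross terms. Integrating this identity over $\Omega$, applying the divergence theorem to the second term on the left (the boundary integral drops because $u^2/v=0$ on $\partial\Omega$), and using $\sum_{i,j} D_i(A_{ij}D_j v)=-Cv$ from $Lv=0$, I would obtain
\begin{equation*}
M(u) \;=\; \int_\Omega \sum_{i,j=1}^n A_{ij}\!\left(D_i u - \tfrac{u}{v}D_i v\right)\!\left(D_j u - \tfrac{u}{v}D_j v\right)dx.
\end{equation*}

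Because $A_{ij}$ is positive definite in $\Omega$, the right side is non-negative, hence $M(u)\geq 0$. The hypothesis $M(u)\leq 0$ therefore forces $M(u)=0$ and, by positive definiteness, $D_i u - (u/v)D_i v = 0$ for each $i$ almost everywhere on $\Omega$. This is equivalent to $D_i(u/v)=0$, so $u=cv$ for some constant $c$ on each connected component. The boundary condition $u=0$ on $\partial\Omega$ together with $v\neq 0$ then forces $c=0$, i.e.\ $u\equiv 0$, contradicting the assumption that $u$ is not identically zero.

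The main technical obstacle I anticipate is not the algebra of the Picone identity but the \emph{regularity justification} for the integration by parts: one needs enough smoothness of $u^2/v$ up to the boundary so that the divergence theorem applies with no boundary contribution. The conditions placed on $\mathcal{D}$ (continuous on $\overline{\Omega}$, vanishing on $\partial\Omega$, uniformly continuous first partials on $\Omega$), the continuity of the entries $A_{ij}$, and the classical solution regularity of $v$ together with $|v|\geq\delta>0$ should be enough to make $u^2/v$ admissible, possibly after an approximation argument on an interior exhaustion $\Omega_k\Subset \Omega$ and a limit as $k\to\infty$ using dominated convergence.
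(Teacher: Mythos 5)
Your Picone-identity argument is correct and is essentially the same approach the paper uses for its fractional analogue (Lemma 3.1), where the author writes the same identity in the form $\sum_{i,j} y^a B_{ij}X^iX^j + \sum_i D_i(U^2Y^i) = \sum_{i,j} y^a B_{ij}D_iU\,D_jU + (U^2/v)Lv$ with $X^i = vD_i(U/v)$ and $Y^i = v^{-1}\sum_j y^a B_{ij}D_j v$, which is precisely your identity with the divergence term left in unexpanded form. The paper does not itself supply a proof of Lemma 1.7 (it cites Clark and Swanson), but your derivation matches the classical argument underlying both that result and the paper's fractional version.
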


In recent years, there have been a good amount of research works on the fractional Laplace equations dealing with existence, multiplicity and regularity questions, see for instance 
\cite{aut,dica,di, jtgd,jtgdrbv,fr, franz,lin,pala, ros,seru,ser,tyagij} and many other papers but
to the best of our knowledge, there are not many results available which deal with the qualitative behavior of the solutions such as Sturm-Picone theorem. 
We refer to a very recent paper \cite{sdi} which deals with qualitative behaviours of fractional equations.

Very recently, an attempt is also made to generalize 
the Leighton's variational lemma for a class of fractional Laplace equations. More precisely, the following lemma is establised in \cite{jt}.
\begin{lem}\cite{jt}
Let $2s<n<4s,\,\,0<s<1.$ Let \,$a\in L^{\infty}(\Omega).$ If there exists a function $u\in X_0$ not identically zero such that $j(u)\leq 0,$ then every solution $v$ of
\begin{equation}\label{54}
(-\Delta)^{s} v = a(x) v\,\,\mbox{in}\,\, \Omega;\,\, v= 0\,\,\,\mbox{in}\,\,\R^n \backslash\Omega,\,
\end{equation}
except a constant multiple of $u$ vanishes at some point of $\Omega,$ 
 where
 $$    j(u)=  \int_{\R^n}\left[|(-\Delta)^{\frac{s}{2}}u|^{2} - a(x) u^2\right] dx.$$
 \end{lem}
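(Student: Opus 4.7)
\emph{Proof proposal.} My plan is to imitate the classical Leighton/Clark--Swanson strategy (cf.\ Theorem \ref{th.6} and Lemma \ref{th.7}) with a \emph{fractional Picone-type inequality} replacing the classical pointwise Picone identity $|\nabla u|^{2} - \nabla v \cdot \nabla(u^{2}/v) = |\nabla u - (u/v)\nabla v|^{2} \geq 0$. Argue by contradiction: suppose there is a solution $v$ of \eqref{54} which is not a constant multiple of $u$ and which satisfies $v(x) \neq 0$ for every $x \in \Omega$. Replacing $v$ by $-v$ if necessary, I may assume $v \geq 0$; a strong maximum/Hopf-type principle for $(-\Delta)^{s}$ then upgrades this to $v > 0$ throughout $\Omega$. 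The dimensional window $2s < n < 4s$ is precisely what provides the Hölder regularity of $v$ needed for such a maximum principle.

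The key tool is the pointwise inequality
$$(|u|(x) - |u|(y))^{2} \;\geq\; (v(x) - v(y))\left(\frac{u^{2}(x)}{v(x)} - \frac{u^{2}(y)}{v(y)}\right),$$
valid whenever $v > 0$, with equality if and only if $|u|(x)/v(x) = |u|(y)/v(y)$. Dividing by $|x-y|^{n+2s}$, integrating over $\R^{n} \times \R^{n}$, and using the Kato-type bound $\int_{\R^n}|(-\Delta)^{s/2}|u||^{2} \leq \int_{\R^n} |(-\Delta)^{s/2} u|^{2}$ together with the identity linking the Gagliardo seminorm to $\|(-\Delta)^{s/2} u\|_{L^{2}}$, the left-hand side is dominated by $C(n,s) \int_{\R^{n}} |(-\Delta)^{s/2} u|^{2} \, dx$. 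On the other hand, the right-hand side, after choosing $\varphi = u^{2}/v$ as a test function in the weak formulation of $(-\Delta)^{s} v = a(x) v$, equals $C(n,s) \int_{\Omega} a(x) u^{2}\, dx$. Combining gives
$$\int_{\R^{n}} |(-\Delta)^{s/2} u|^{2} \, dx \;\geq\; \int_{\Omega} a(x) u^{2} \, dx,$$
which is exactly $j(u) \geq 0$.

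Together with the hypothesis $j(u) \leq 0$, this forces $j(u) = 0$ and hence equality throughout the Picone inequality, so $|u|/v$ is constant on $\Omega$; since $v > 0$, this means $u = c v$ for some constant $c$, contradicting the assumption. The \emph{main obstacle} is making these manipulations rigorous at the regularity available for weak solutions. The candidate test function $u^{2}/v$ need not belong to $X_{0}$ because of the boundary behaviour of $v$; the standard remedy is to regularise by $\varphi_{\eps} = u^{2}/(v+\eps)$ and pass to the limit $\eps \downarrow 0$, controlling cross terms via the Sobolev embedding $X_{0} \hookrightarrow L^{2n/(n-2s)}(\R^{n})$ (which uses $n > 2s$) and a Hopf-type lower bound $v(x) \gtrsim \mathrm{dist}(x,\partial\Omega)^{s}$. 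The upper bound $n < 4s$ furnishes the extra regularity (e.g.\ $v \in C^{\alpha}$ or $v \in H^{2s}$) needed to justify the limit passage and the equality case. This truncation/limit argument is the technical heart of the proof.
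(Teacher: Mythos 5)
The present paper does not actually prove this lemma: it is quoted verbatim from \cite{jt}. The closest object of comparison internal to the paper is the proof of Lemma~\ref{lto}, the Leighton-type lemma for $(-\mathrm{div}(A(x)\nabla))^{s}$, and that proof is genuinely different from the route you propose. The paper passes to the Caffarelli--Stinga extension, works with a \emph{local} Picone-type identity built from the vectors $X^{i}=vD_{i}(U/v)$, $Y^{i}=v^{-1}\sum_{j}y^{a}B_{ij}D_{j}v$ in the weighted cylinder $\Omega\times(0,\infty)$, integrates by parts by Green's theorem, and finally returns from the trace at $y=0$ to $\Omega$ via the Poisson-kernel representation. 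That last step is the price of localizing by extension, and is exactly why the paper needs a sign hypothesis on the potential to push vanishing of the extended solution down to vanishing of $u$ in $\Omega$. Your argument instead stays entirely nonlocal: you test the weak formulation against $u^{2}/v$ and invoke the discrete (Frank--Seiringer-type) Picone inequality for the Gagliardo form. This avoids the extension entirely, never leaves $\Omega$, and delivers the equality case directly without any positivity of $a$. So: a different decomposition (local-in-the-cylinder identity versus pointwise nonlocal inequality), a different mechanism for the equality case (Poisson kernel positivity versus rigidity in the Picone inequality), and a payoff in that the direct route is cleaner where it works.

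Two remarks on the substance of your sketch. First, the passage through $|u|$ and the Kato-type bound are unnecessary and mildly misleading: the pointwise inequality
$$(a-b)^{2}-(c-d)\left(\frac{a^{2}}{c}-\frac{b^{2}}{d}\right)=\left(a\sqrt{\tfrac{d}{c}}-b\sqrt{\tfrac{c}{d}}\right)^{2}\geq 0$$
holds for all real $a,b$ and all $c,d>0$, with equality iff $a/c=b/d$. Taking $a=u(x),\,b=u(y),\,c=v(x),\,d=v(y)$ directly yields both the integral inequality $[u]^{2}_{H^{s}}\geq\int a\,u^{2}$ and, in the equality case, $u/v\equiv\mathrm{const}$ immediately; with $|u|$ you only get $|u|/v$ constant and would need an extra continuity argument to remove the absolute value. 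Second, the Hopf/maximum-principle step at the very start is superfluous: you have already supposed for contradiction that $v$ vanishes nowhere in $\Omega$, so once $v$ is continuous (which the window $2s<n<4s$ supplies) and $\Omega$ is connected, $v$ has fixed sign automatically. Where a Hopf-type barrier $v\gtrsim\mathrm{dist}(\cdot,\partial\Omega)^{s}$ matters is in passing $u^{2}/(v+\eps)\to u^{2}/v$, which you have located correctly. The genuine remaining gap, which you honestly flag, is admissibility: $u\in X_{0}$ need not be bounded, so $u^{2}/(v+\eps)$ is not obviously in $X_{0}$, and one must first truncate $u$ before sending $\eps\downarrow 0$. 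That technical layer is precisely what the extension argument of Lemma~\ref{lto} trades away in exchange for local calculus in an extra dimension.
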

In the above works, we have defined the fractional Laplacian of $u$ in P.V. integral sense, see Section\,3\,\cite{di} for the details and 
$$  X_0 = \{g\in X : g = 0 \,\,\mbox{a.e.\,\,in}\,\, \R^n \backslash\Omega\},                    $$
where $X$ denotes the linear space of Lebesgue measurable functions
from $\R^n$  to $\R $ such that the restriction to $\Omega$ of any function $g$ in $X$ belongs to $L^2 (\Omega)$ and
the map 
$$ (x,y):\longrightarrow    \frac{g(x)-g(y)}{|x-y|^{\frac{n}{2} +s}}  \in L^{2} (\R^{2n} \backslash(\mathcal{C}\,\Omega \times \mathcal{C}\,\Omega )),         $$
where $\mathcal{C}\,\Omega= \R^n\backslash \Omega.$

Since the classical proof of Sturm-Picone theorem for a pair of ordinary differential equations/ systems as well as elliptic partial differential
equations steadily rests on Leighton's lemma, so it has been extended in different directions and applied to establish oscillation as well as nonoscillation theorems to
differential equations. There are several interesting papers on this subject but for the sake of brevity, we list a few works.
For instance, see the works of 
Jaro\v{s} et\,al.\,\cite{jaros},\, Komkov\,\cite{Komko}, Do\v{s}l\'{y} and \,Jaro\v{s}\,\cite{dosly}, see \cite{tyagi} for a 
generalization of Leighton's variational lemma for nonlinear differential equations and the earlier developments on this area. 
The results of the author\,\cite{tyagi} are used and extended to more general equations by A.\,Tiryaki\,\cite{tiryaki, tir} and in his other papers. 
For a Sturmian comparison and oscillation theorems for a class of half-linear elliptic equations, we refer to \cite{yoshi} and the references cited therein.
Motivated by the above research works and by an increasing interest on fractional Laplace equations and related existence and qualitative questions in recent years, 
it is natural to ask the following question:

\textit{Is there any generalization of Sturm--Picone theorem for a pair of fractional nonlocal equations \eqref{no1}, \eqref{no2}}?

In this paper, we answer the above question affirmatively. 
More precisely, we establish a generalization of  Sturm--Picone theorem for a pair of equations \eqref{no1} and \eqref{no2}. Firstly, we  obtain
Leighton's variational lemma for fractional nonlocal equations by defining the suitable quadratic functional associated with the equation
and then using Leighton's variational lemma, we establish the generalization of Sturm--Picone theorem.

The plan of this paper is as follows. Section 2 deals with the briefs on the fractional nonlocal equations. In Section 3, we state and prove Leighton's variational lemma and establish 
a generalization of Sturm--Picone theorem to fractional nonlocal equations. A few remarks are a part of Section 4.
\section{Fractional nonlocal equations}
Let us recall the very useful briefs on fractional nonlocal equations, see \cite{cast,stin} for the details.

We consider the following fractional nonlocal equation
\begin{eqnarray}\label{pr}
 \begin{gathered}
 (-div. (A(x)\nabla))^{s} u =  C(x) u \,\,\,\mbox{in}\,\,\Omega,\\
 u   = 0 \,\,\,\,\mbox{on}\,\,\,\,\,\,\,\partial \Omega,
\end{gathered}
 \end{eqnarray}
where $\Omega\subset \R^n,\,0<s<1$ is an open bounded subset with smooth boundary, $A$ is real symmetric and positive definite matrix with continuous entries on
$\Omega,\,C\in C(\overline{\Omega}).$
By using the $L^2$-Dirichlet eigenvalues and eigenfunctions $(\lambda_k,  \phi_{k})_{k=0}^{\infty},\,\,\phi_{k}\in H_{0}^{1}(\Omega)$ of $L= (-div. (A(x)\nabla))^{s},$
we can define the fractional powers $L^{s}u = (-div. (A(x)\nabla))^{s} u,\,\,0<s<1,$ for $u$  in the domain Dom$(L^s)\equiv \mathcal{H}^{s}$ in a natural manner, where

\[
 \mathcal{H}^{s}= \begin{cases}
H^{s}(\Omega), \quad &\text{when}~ 0<s<\frac{1}{2},\\
H^{\frac{1}{2}}_{00}(\Omega),  \quad &\text{when}~s= \frac{1}{2},\\
H^{s}_{0}(\Omega), \quad &\text{when}~ \frac{1}{2}<s<1.
\end{cases}
\]
The spaces $H^{s}(\Omega)$ and $H^{s}_{0}(\Omega),\,s\neq \frac{1}{2}$ are the classical fractional Sobolev spaces which are given by
$\overline{C_{c}^{\infty}(\Omega)}$ under the norm
$$ ||u||^{2}_{H^{s}(\Omega)} = ||u||^{2}_{L^{2}(\Omega)} + [u]_{H^{s}(\Omega)}^{2}, $$
 where 
 $$[u]_{H^{s}(\Omega)}^{2}= \int_{\Omega}\int_{\Omega}  \frac{(u(x)- u(y))^2}{|x-y|^{n+2s}} dx dy,\,\,\,\,0<s<1.            $$
The space $H^{\frac{1}{2}}_{00}(\Omega)$ is called Lions-Magenes space which is defined as follows:
$$ H^{\frac{1}{2}}_{00}(\Omega):=\left\{u\in L^{2}(\Omega)|\,\,\,[u]_{H^{\frac{1}{2}}(\Omega)} <\infty,\,\,\text{and}\,\,\int_{\Omega} \frac{(u(x))^2}{dist(x, \partial\Omega)}<\infty
\right\}, $$
see Chapter 1\,\cite{lion} and \cite{noch} for the details.
 Following \cite{cast}, if $u(x)=\sum\limits_{k=0}^{\infty} u_{k}\phi_{k}(x),\,\,x\in \Omega, $ then
$$ L^{s}u(x)=  \sum\limits_{k=0}^{\infty} \lambda_{k}^{s}u_{k}\phi_{k}(x).            $$
One can see that $u=0$ on $\partial \Omega$ and equivalently, we have the semigroup formula
\begin{equation}\label{fr1}
 L^{s}u(x)= \frac{1}{\Gamma (-s)} \int_{0}^{\infty} (e^{-t L}u(x) - u(x)    ) \frac{dt}{t^{1+s}},
 \end{equation}
where $\{ e^{-t L}u \}_{t>0}$ is the heat diffusion semigroup generated by $L$ with the Dirichlet boundary conditions and $\Gamma$ is the Gamma function.
Now, as it is already known that (see\,\cite{stin}) the fractional operators \eqref{fr1} can be described as Dirichlet-to-Neumann maps for an extension problem in the spirit of 
the extension problem for the fractional Laplacian on $\R^n$ of \cite{caff}. More precisely, let $U=U(x, y):\,\Omega \times (0, \infty) \longrightarrow \R$ be the solution of the
following degenerate elliptic equation with $A_2$ weight:

\begin{equation}\label{fr2}
\begin{aligned}
 div(y^{a} B(x)\nabla U)&  =  0 \,\,\,\mbox{in}\,\,\Omega\times (0, \infty),\\
 U   &= 0 \,\,\,\,\mbox{on}\,\,\,\partial \Omega \times[0, \infty),\\
 U(x, 0)& = u(x)\,\,\,\,\mbox{on}\,\,\,\Omega,
 \end{aligned}
 \end{equation}
 where
 
 \begin{equation}\label{fr3}
B(x):=
\begin{bmatrix}
A(x) &  0\\
0 & 1 
\end{bmatrix}
\in \R^{n+1}\times \R^{n+1}\,\,\mbox{and}\,\,a:= 1-2s \in (-1, 1).
\end{equation}
 Then we have
 
 $$ - \frac{1}{2s} \lim\limits_{y\rightarrow 0^+} y^{a} U_{y}(x, y) = - \lim\limits_{y\rightarrow 0^+} \frac{U(x, y)- U(x, 0)}  {y^{2s}}= c_{s} L^{s} u(x),\,\,x\in \Omega.                                     $$
 Also, there are explicit formulas for $U$ in terms of the semigroup $e^{-tL},$ see Theorem 2.5 \,\cite{cast}.

 In view of \eqref{fr1} and \eqref{fr2}, Equation\,\eqref{pr} turns out the following
 
 \begin{equation}\label{fr4}
\begin{aligned}
 div(y^{a} B(x)\nabla U)&  =  0 \,\,\,\mbox{in}\,\,\Omega\times (0, \infty),\\
 U   &= 0 \,\,\,\,\mbox{on}\,\,\,\partial \Omega \times[0, \infty),\\
- \frac{1}{2s}  \lim\limits_{y\rightarrow 0^+} y^{a} U_{y}(x, y) & = c_{s} C(x) u\,\,\,\,\mbox{on}\,\,\,\Omega,
 \end{aligned}
 \end{equation}
where $U$ and $u$ are related by $U(x, 0) = u(x)\,\mbox{in}\,\Omega$ and $B$ is defined in \eqref{fr3}.

Using Theorem\,2.5\,\cite{cast}, one can see the existence of a unique weak solution to \eqref{fr4}, which is given below:\\
Let $u\in \mathcal{H}^{s},\,C\in L^{\infty}(\Omega),$
then there exists a unique weak solution  $U\in H_{0}^{1}(\Omega \times (0, \infty), y^{a} dX)$ of \eqref{fr4}, where $B$ and $a$ are defined as above. More precisely, for each
$\phi \in H_{0}^{1}(\Omega \times (0, \infty), y^{a} dX),$
\begin{equation}
  \int_{\Omega} \int_{0}^{\infty} y^{a} B(x) \nabla U\,\nabla \phi dX= c_{s}\int_{\Omega}C(x) u(x) \phi(x, 0) dx.
\end{equation}

Let us recall the following boundary regularity of the solution of \eqref{pr}, see Theorem\,1.5\,\cite{cast} for the details.

\rm{(i)} Suppose that $2(2s-1)^{+} <n<4s,$ and $\Omega$ is a $C^{1}$ domain and that $A(x)$ is continuous in $\overline{\Omega}.$ Then $u\in C^{0, \alpha} (\overline{\Omega}),$
for $\alpha= 2s -\frac{n}{2}.$

\rm{(ii)} Suppose that $s>\frac{1}{2},\,n< 2 (2s -1) ,\,\,\Omega$ is a $C^{1, \alpha}(\Omega)$ domain and $A(x)$ is in $C^{0, \alpha}(\Omega),$ for $\alpha = 2s -n \in (0,1).$
Then $u\in C^{1, \alpha}(\overline{\Omega}).$
 
 \section{Sturm-Picone Theorem}
  In this section, we state and prove Sturm-Picone theorem. 
  
  Let 
$\mathcal{D}:= \{U\in H_{0}^{1}(\Omega \times (0, \infty), y^{a} dX): U(x, 0)= u(x)      \}$
and let  us define the quadratic functional assocaied with \eqref{fr4}:
\begin{equation}\label{fl}
  M(U)= \int_{\Omega}\int_{0}^{\infty}\sum\limits_{i, j} y^{a} B_{i j}D_{i}U D_{j}U dX- 2sc_{s}\int_{\Omega} C(x) (U(x, 0))^{2} dx,\,\,\,\,U\in \mathcal{D}.  
 \end{equation}
The following is an important lemma to establish Sturm-Picone theorem.
 \begin{lem}(Leighton's Variational Lemma)\label{lto}
  Let $s,\,\Omega$ and $A$ be defined as in \rm{(i)} and \rm{(ii)}.  If there exists $U\in \mathcal{D} $ not identically zero such that $M(U)\leq  0,$ then every solution $v$ of \eqref{fr4} vanishes at some point of
  $\overline{\Omega}\times[0, \infty).$ Also, if $C \in C(\overline{\Omega}),\,C>0,$ then every nontrivial solution of \eqref{pr} vanishes at some point of $\Omega.$
  \end{lem}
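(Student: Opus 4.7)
The plan is to argue by contradiction using a weighted Picone-type identity tailored to the degenerate extension problem \eqref{fr4}. Suppose, toward a contradiction, that some solution $V$ of \eqref{fr4} fails to vanish anywhere in $\Omega\times[0,\infty)$; by continuity and connectedness we may, after a choice of sign, assume $V>0$ throughout. The central tool is the pointwise identity
\[
y^{a}B(x)\nabla U\cdot\nabla U - y^{a}B(x)\nabla V\cdot\nabla\!\left(\frac{U^{2}}{V}\right) = y^{a}B(x)\!\left(\nabla U-\frac{U}{V}\nabla V\right)\!\cdot\!\left(\nabla U-\frac{U}{V}\nabla V\right),
\]
whose right-hand side is pointwise nonnegative since $y^{a}>0$ and $B$ is positive definite.

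Next I would integrate this identity over $\Omega\times(0,\infty)$ and apply the divergence theorem to the second term on the left. Four contributions require analysis: the interior bulk vanishes because $\mathrm{div}(y^{a}B\nabla V)=0$; the lateral piece on $\partial\Omega\times[0,\infty)$ vanishes since $U=0$ there; the $y\to\infty$ piece vanishes by the decay built into $H^{1}_{0}(\Omega\times(0,\infty),y^{a}dX)$; and on the base $\Omega\times\{0\}$ the fractional boundary condition of $V$ in \eqref{fr4} yields
\[
\int_{\Omega}\frac{u(x)^{2}}{v(x)}\Bigl[-\lim_{y\to 0^{+}}y^{a}V_{y}(x,y)\Bigr]dx = 2sc_{s}\int_{\Omega}C(x)\,u(x)^{2}\,dx.
\]
Collecting, $M(U)\ge 0$, and together with the hypothesis $M(U)\le 0$ this forces $M(U)=0$ and, by the equality case of the Picone identity, $\nabla U=(U/V)\nabla V$ almost everywhere. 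Integration gives $U\equiv cV$ for some constant $c$, with $c\neq 0$ since $U\not\equiv 0$; hence any non-vanishing solution $V$ would be forced to be a scalar multiple of the prescribed $U$, and any $V$ outside this one-parameter family yields strict inequality somewhere and $M(U)>0$, the desired contradiction, which proves the first assertion (up to the customary ``scalar multiple of $U$'' exception familiar from Lemma~\ref{th.4}).

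For the second assertion, let $u$ be a nontrivial solution of \eqref{pr} with $C>0$; if $u$ had no zero in $\Omega$ one could take $u>0$ by connectedness, and the extension $V$ of Theorem~2.5 of \cite{cast} (convolution of $u$ against the positive semigroup kernel generated by $\{e^{-tL}\}_{t>0}$) would satisfy $V>0$ throughout $\Omega\times[0,\infty)$. The first assertion, applied to this $V$, would then pin the hypothesised $U$ to a scalar multiple of the extension of $u$; the strict positivity of $\int_{\Omega}C\,u^{2}\,dx$ ensured by $C>0$ drives a refined strict-Picone comparison against any other admissible $U$, yielding $M(U)>0$ and contradicting $M(U)\le 0$. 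Hence $u$ must vanish at some point of $\Omega$.

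The step I expect to be most delicate is the weighted integration by parts on the base $\Omega\times\{0\}$: when $a\neq 0$ the weight $y^{a}$ is degenerate (for $s<1/2$) or singular (for $s>1/2$), so the trace of $y^{a}V_{y}$ must be interpreted as a one-sided limit and paired with the trace of $U^{2}/V$. Making this precise relies on the boundary regularity statements (i)--(ii) of Section~2, trace theory in the Muckenhoupt $A_{2}$-weighted Sobolev space $H^{1}_{0}(\Omega\times(0,\infty),y^{a}dX)$, and a Hopf-type estimate keeping $V$ bounded away from zero on compact subsets of $\Omega\times[0,\infty)$, so that $U^{2}/V$ has enough integrability near both $\partial\Omega\times[0,\infty)$ and $\Omega\times\{0\}$ to close the divergence identity.
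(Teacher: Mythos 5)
Your first assertion follows the same route as the paper: a weighted Picone identity, integration by parts using $\operatorname{div}(y^{a}B\nabla V)=0$, the Dirichlet condition on $\partial\Omega\times[0,\infty)$, the Neumann--fractional boundary condition at $y=0$, and then the positive definiteness of $B$. The paper writes the identity in the explicit $X^{i},Y^{i}$ form (its \eqref{id1}--\eqref{idd}), while you write it as $y^{a}B\nabla U\cdot\nabla U - y^{a}B\nabla V\cdot\nabla(U^{2}/V)=y^{a}B(\nabla U-\tfrac{U}{V}\nabla V)\cdot(\nabla U-\tfrac{U}{V}\nabla V)$; these are the same identity. However, your handling of the equality case is imprecise: you conclude $U\equiv cV$ and then fall back on ``the customary scalar-multiple exception familiar from Lemma~\ref{th.4},'' but Lemma~\ref{lto} \emph{has no such exception}. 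The paper closes the case cleanly: a non-vanishing $v$ and a $U\in\mathcal{D}$ obeying $U=0$ on $\partial\Omega\times[0,\infty)$ cannot be proportional, so equality in the Picone inequality is impossible and $M(U)>0$ strictly. You should state this incompatibility of boundary behaviours explicitly rather than hedging with an exception that is not part of the claim.

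For the second assertion, you and the paper genuinely diverge, and your version has a gap. The paper obtains a zero $(x,y_{1})\in\overline{\Omega}\times[0,\infty)$ of $v$ from the first part and then invokes the explicit Poisson/heat-kernel representation (\eqref{vv1}--\eqref{vv3}) together with the strict positivity $W_{t}(x,z)>0$ to argue: if $y_{1}=0$ then $u$ itself vanishes, while if $y_{1}>0$ the vanishing of a strictly positive-kernel average of $C(\cdot)u(\cdot)$ forces $u$ (since $C>0$) to change sign; in either case $u$ vanishes at some point of $\Omega$. Your sketch only gestures at the kernel positivity to conclude $V>0$ in $\Omega\times(0,\infty)$ and then appeals to a ``refined strict-Picone comparison'' that is never specified. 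This does not close the argument: the conclusion of the first assertion is that $V$ vanishes somewhere in $\overline{\Omega}\times[0,\infty)$, which is automatically satisfied on $\partial\Omega\times[0,\infty)$ for \emph{any} solution of \eqref{fr4}, so showing $V>0$ in the open cylinder does not by itself produce a contradiction. Moreover, the boundary-incompatibility trick you would need (as in Part~1) fails here because both $U$ and the extension $V$ vanish on $\partial\Omega\times[0,\infty)$, so you cannot rule out $U=cV$ by that route. The missing ingredient is precisely the paper's quantitative positivity argument via the Poisson kernel, which localizes the zero of $V$ to a statement about $u$ on $\Omega$. As written, your proof of the second part is incomplete.
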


 \begin{proof}
  We will prove this lemma by the method of contradiction. Suppose there exists a solution $v$ of \eqref{fr4} such that $v\neq 0$ on $\overline{\Omega}\times [0, \infty).$ 
  For $U\in \mathcal{D}, $
  let us define
  $$ X^{i} =  v D_{i}\left( \frac{U}{v} \right), \,\,\,\,\,\,Y^{i}= \frac{1}{v} \sum\limits_{ j}  y^{a} B_{ij} D_{j}v,\,\,\,\,i=1,2,\cdots,n.        $$
  $$  X^{n+1} =  v D_{y}\left( \frac{U}{v} \right),\,\,\,\,\,\,Y^{n+1}= \frac{1}{v} \sum\limits_{ j}  y^{a} B_{(n+1)j} D_{j}v =\frac{1}{v}   y^{a}  D_{y}v.     $$
  $$ G(U, v)=  \sum\limits_{ i,j}  y^{a} B_{ij} X^{i}X^{j}  +  \sum\limits_{ i} D_{i} (U^{2}Y^{i}).               $$
  Now, one can establish the following identity in $\Omega\times [0, \infty):$
 \begin{equation}\label{id1}
  G(U, v)= \sum\limits_{ i,j}  y^{a} B_{ij} D_{i}U\,D_{j}U + \frac{U^2}{v} Lv,
 \end{equation}
where\,\,$Lv=  div(y^{a} B(x)\nabla v).$ Indeed, 
 
 \begin{equation} \label{idd}
\begin{aligned}
 \sum\limits_{ i,j}  y^{a} B_{ij} X^{i}X^{j}  +  \sum\limits_{ i} D_{i} (U^{2}Y^{i})& = \frac{1}{v^2}  \sum\limits_{ i,j}  y^{a} B_{ij}(vD_{i}U- UD_{i}v)(vD_{j}U- UD_{j}v)   \\                       
& + \frac{2U}{v} \sum\limits_{ i,j}  y^{a} B_{ij}  D_{i}U\,D_{j}v + \frac{U^2}{v^2}  \sum\limits_{ i,j} \left(v D_{i} (y^{a} B_{ij} D_{j}v) - y^{a}  B_{ij} D_{i}v  D_{j}v \right ).
\end{aligned}
\end{equation}
 Since $B_{ij}$ is symmetric, so \eqref{idd} reduces to the RHS of \eqref{id1}. Now, since from \eqref{fr4}, $Lv=0$ in $\Omega\times (0, \infty),$  so from \eqref{fl} and 
 \eqref{id1}, it follows that
 \begin{equation}\label{po}
  \int_{\Omega}\int_{0}^{\infty} \sum\limits_{i, j} y^{a}B_{i j}D_{i}U D_{j}U dX =  \int_{\Omega}\int_{0}^{\infty}\left[ \sum\limits_{i, j} y^{a} B_{i j} X^{i}X^{j} +
  \sum\limits_{ i} D_{i} (U^{2}Y^{i})\right] dX.  
 \end{equation}
Since $U$ vanishes on $\partial \Omega \times[0, \infty),$ so by Green's theorem and third equation of \eqref{fr4}, we get
\begin{equation}\label{mag}
 \int_{\Omega}\int_{0}^{\infty} \sum\limits_{ i} D_{i} (U^{2}Y^{i})) dX= 2s c_{s} \int_{\Omega} C(x) (U(x, 0))^{2} dx.
\end{equation}
Now, \eqref{po} and \eqref{mag} yields that

\begin{equation}\label{maza}
 \int_{\Omega}\int_{0}^{\infty}\sum\limits_{i, j} y^{a} B_{i j}D_{i}U D_{j}U dX- 2sc_{s}\int_{\Omega} C(x) (U(x, 0))^{2} dx= 
 \int_{\Omega}\int_{0}^{\infty} \sum\limits_{i, j} y^{a} B_{i j} X^{i}X^{j}.
\end{equation}
Since $(B_{ij})$ is positive definite so from \eqref{fl} and \eqref{maza}, we get $M(U)\geq 0,$ and equality holds if and only if
 $X^{i}\equiv 0$ for each $i= 1, 2, 3,\,\cdots,\,n,\,n+1,$  i.e., $U$  is a constant multiple of $v.$ But this is not possible, since $U=0$ on $\partial \Omega \times[0, \infty)$ 
 while $v\neq 0$  on $\overline{\Omega}\times [0, \infty),$ and therefore $M(U)>0,$ which is a contradiction. This implies that there exists 
 $(x, y_1) \in \overline{\Omega}\times[0, \infty)$ such that $v(x, y_1)=0.$
 
 Now, by using the heat kernel, we can see that the solution of \eqref{fr4} can be represented in terms of Poisson's kernel, i.e.,
 \begin{equation}\label{vv1}
 v(x, y)= c_{s} \int_{\Omega} P^{s}_{y} (x, z)  C(z) u(z) dz,
 \end{equation}
 where  $u$ satisfies Equation\,\eqref{pr} and $P^{s}_{y}(x, z)$ is the Poisson kernal, which is given by
\begin{equation}\label{vv2}
 P^{s}_{y} (x, z)= \frac{y^{2s}}{4^{s}  \Gamma (s)} \int_{0}^{\infty}e^{- \frac{y^2}{4t}} W_{t}(x, z) \frac{dt}{t^{1+s}},
 \end{equation}
see, pp.\,777\,\cite{cast} for the details. In \eqref{vv2}, $W_{t}(x, z)$ is the distributional heat kernel for $ L= -div. (B(x)\nabla) $  with the Dirichlet boundary condition, which is given by
\begin{equation}\label{vv3}
 W_{t}(x, z)= \sum\limits_{k=0}^{\infty} e^{-t \lambda_{k}} \phi_{k}(x) \phi_{k} (z)  = W_{t}(z, x),\,\,\,t>0,\,x, z\in \Omega.
 \end{equation}
From \cite{dav}, it is clear that $W_{t}(x, z)>0,\,\,\forall\,\,t>0$ and $x, z\in \Omega.$
From \eqref{vv1}, \eqref{vv2} and \eqref{vv3}, it follows that $v(x, y_1)=0$ implies that either $y_1=0$ or $u$ changes sign in $\Omega$ and in both cases, $u$
vanishes at some point of $\Omega.$ This completes the proof of the lemma.
 \end{proof}

 \begin{rem}
  It will be of interest to remove the sign condition on $C$ for the second part of the lemma.
 \end{rem}

Let us consider a pair of nonlocal equations:

\begin{eqnarray}\label{eqn1}
 \begin{gathered}
 (-div. (A_1(x)\nabla))^{s} u =  C_{1}(x) u \,\,\,\mbox{in}\,\,\Omega,\\
 u   = 0 \,\,\,\,\mbox{on}\,\,\,\,\,\,\,\partial \Omega,
\end{gathered}
 \end{eqnarray}
and
 \begin{eqnarray}\label{eqn2}
 \begin{gathered}
 (-div. (A_2(x)\nabla))^{s} w =  C_{2}(x) w \,\,\,\mbox{in}\,\,\Omega,\\
 w   = 0 \,\,\,\,\mbox{on}\,\,\,\,\,\,\,\partial \Omega,
\end{gathered}
 \end{eqnarray}
where  $\Omega\subset \R^n$ is an open bounded subset with smooth boundary, $0<s<1,\,A_1,\,A_2$ are real symmetric and positive definite matrices
on $\Omega$ with continuous entries on $\overline{\Omega}$ and $C_{1}, C_{2}\in C(\overline{\Omega}).$ 

In view of \eqref{fr1} and \eqref{fr2}, Equations\,\eqref{eqn1} and \eqref{eqn2} turn out the following
 
 \begin{equation}\label{no3}
\begin{aligned}
 div(y^{a} B_1(x)\nabla U)&  =  0 \,\,\,\mbox{in}\,\,\Omega\times (0, \infty),\\
 U   &= 0 \,\,\,\,\mbox{on}\,\,\,\partial \Omega \times[0, \infty),\\
- \frac{1}{2s}  \lim\limits_{y\rightarrow 0^+} y^{a} U_{y}(x, y) & = c_{s} C_{1}(x) u\,\,\,\,\mbox{on}\,\,\,\Omega,
 \end{aligned}
 \end{equation}

and
\begin{equation}\label{no4}
\begin{aligned}
 div(y^{a} B_2(x)\nabla W)&  =  0 \,\,\,\mbox{in}\,\,\Omega\times (0, \infty),\\
 W   &= 0 \,\,\,\,\mbox{on}\,\,\,\partial \Omega \times[0, \infty),\\
- \frac{1}{2s}  \lim\limits_{y\rightarrow 0^+} y^{a} W_{y}(x, y) & = c_{s} C_{2}(x) w\,\,\,\,\mbox{on}\,\,\,\Omega,
 \end{aligned}
 \end{equation}
respectively, where  $U(x, 0) = u(x),\,\,W(x, 0)= w(x)\,\,\mbox{in}\,\Omega$ and $B_1$ and $B_2$ are defined as follows: 

\begin{equation}\label{frqn}
B_1(x):=
\begin{bmatrix}
A_1(x) &  0\\
0 & 1 
\end{bmatrix},\,\,
B_2(x):=
\begin{bmatrix}
A_2(x) &  0\\
0 & 1 
\end{bmatrix}
\in \R^{n+1}\times \R^{n+1}.
\end{equation}
Let us define the quadratic functionals associated with \eqref{no3} and \eqref{no4}, respectively:
\begin{equation}\label{fln1}
M_{1}(U)= \int_{\Omega}\int_0^\infty \sum_{i, j} y^{a} B_{{1}_{{i, j}}} D_{i}U D_{j}U dX- 2sc_{s}\int_{\Omega} C_{1}(x) (U(x, 0))^{2} dx,\,\,\,\,\,U\in \mathcal{D}. 
 \end{equation}

\begin{equation}\label{fln2}
  M_{2}(U)= \int_{\Omega}\int_{0}^{\infty} \sum_{i, j} y^{a} B_{{2}_{{i, j}}}D_{i}U D_{j}U dX- 2sc_{s}\int_{\Omega} C_{2}(x) (U(x, 0))^{2} dx,\,\,\,\,\,U\in \mathcal{D}  
 \end{equation}
and the variation is given by 
$$V(U)= M_{2}(U)- M_{1}(U)$$
$$\,\,\,\,\,\,\,\,\,\,\,=\int_{\Omega}\int_{0}^{\infty}\sum_{i, j} y^{a} (B_{{2}_{{i, j}}}-  B_{{1}_{{i, j}}}) D_{i}U D_{j}U dX +
2sc_{s}\int_{\Omega} (C_{1}(x) - C_{2}(x)) (U(x, 0))^{2} dx,\,\,\,\,U\in \mathcal{D},\,y>0.$$

\begin{thm} (Generalization of Leighton's Theorem)\label{lego}
 Let $s,\,\Omega$ be defined as in \rm{(i)} and \rm{(ii)}. Let $A_1,\,A_2$ be real symmetric and positive definite matrices
on $\Omega$ which satisfy \rm{(i)} and \rm{(ii)} and $C_{1}, C_{2}\in C(\overline{\Omega}).$ 
 Let $U$ be  nontrivial solution of  \eqref{no4} such that $V(U)\geq 0,\,$ then every solution of \eqref{no3} vanishes at some point of $\overline{\Omega}\times [0, \infty).$
 In addition, if $C_{1}>0$ in $\overline{\Omega},$ then every nontrivial solution of \eqref{eqn1} vanishes at some point of $\Omega.$
\end{thm}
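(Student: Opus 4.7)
The plan is to reduce the theorem to Leighton's variational lemma (Lemma \ref{lto}) applied to equation \eqref{no3}: it suffices to exhibit a function in $\mathcal{D}$, not identically zero, on which the functional $M_1$ is nonpositive. The natural candidate is the given nontrivial solution $U$ of \eqref{no4} itself, for which I expect $M_2(U) = 0$ by direct testing against its own equation, and then the variational hypothesis $V(U) \geq 0$ will transfer nonpositivity from $M_2$ to $M_1$.

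First I would establish $M_2(U) = 0$. Multiplying the first line of \eqref{no4}, namely $\operatorname{div}(y^a B_2(x) \nabla U) = 0$, by $U$ and integrating over $\Omega \times (0, \infty)$, Green's identity in the degenerate weighted setting yields
\begin{equation*}
\int_\Omega \int_0^\infty y^a B_2(x) \nabla U \cdot \nabla U \, dX = -\int_\Omega U(x, 0) \lim_{y \to 0^+} y^a U_y(x, y) \, dx.
\end{equation*}
The contribution along $\partial \Omega \times [0, \infty)$ vanishes because $U \equiv 0$ there, and the flux as $y \to \infty$ vanishes by $H_0^1(\Omega \times (0, \infty), y^a dX)$ membership of $U$. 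The Neumann-type condition in the third line of \eqref{no4} then rewrites the right-hand side as $2sc_s \int_\Omega C_2(x)(U(x, 0))^2 \, dx$. Comparing with \eqref{fln2} gives $M_2(U) = 0$.

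Next, the hypothesis $V(U) = M_2(U) - M_1(U) \geq 0$, combined with $M_2(U) = 0$, forces $M_1(U) \leq 0$. Since $U \in \mathcal{D}$ is nontrivial, Lemma \ref{lto} applied to equation \eqref{no3} asserts that every solution of \eqref{no3} must vanish at some point of $\overline{\Omega} \times [0, \infty)$. For the additional assertion under $C_1 > 0$ on $\overline{\Omega}$, the second part of Lemma \ref{lto} upgrades this conclusion to: every nontrivial solution of \eqref{eqn1} vanishes at some point of $\Omega$.

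The main obstacle is the clean justification of the integration by parts in the weighted Sobolev space $H_0^1(\Omega \times (0, \infty), y^a dX)$. Since $a = 1 - 2s \in (-1, 1)$ is an $A_2$ Muckenhoupt weight, the degenerate elliptic machinery of Caffarelli--Silvestre, packaged in Section 2 via the extension setup and the weak formulation following \eqref{fr4}, legitimizes the computation. Indeed, the same manipulation is already used in the proof of Lemma \ref{lto} at the step leading to \eqref{mag}, so the essential new content here is only the bookkeeping identification $M_2(U) = 0$; everything else is a direct reduction to Leighton's lemma.
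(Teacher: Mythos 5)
Your proof is correct and follows the same route as the paper: establish $M_2(U)=0$ via Green's formula in the weighted extension, combine with $V(U)\geq 0$ to get $M_1(U)\leq 0$, then invoke Lemma \ref{lto}. The paper's argument is exactly this, only stated more tersely; your elaboration of the integration by parts that yields $M_2(U)=0$ fills in the detail the paper leaves implicit.
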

\begin{proof}
 Since $U$ is a nontrivial solution of \eqref{no4}, so, Green's formula yields that
 $M_{2}(U)=0.$ Since $V(U)= M_{2}(U)- M_{1}(U)\geq 0,$ i.e., $M_{1}(U)\leq M_{2}(U)=0,\,\,U\in \mathcal{D}.$ Now, by an application of Lemma\,\ref{lto}, 
 every solution of \eqref{no3}  vanishes at 
 some point of $\overline{\Omega}\times [0, \infty).$  Also, every nontrivial solution of \eqref{eqn1} vanishes at some point of $\Omega.$ 
 This completes the proof.
 \end{proof}
\begin{thm} (Sturm--Picone Comparison Theorem)\label{stm}
 Let $s,\,\Omega$  be defined as in \rm{(i)} and \rm{(ii)}. Let $A_1,\,A_2$ be real symmetric and positive definite matrices
on $\Omega$ which satisfy \rm{(i)} and \rm{(ii)} and let
 $C_{1},  C_{2}\in C(\overline{\Omega})$ with $C_{1}(x)- C_{2}(x)\geq 0 $ on $\overline{\Omega}.$  
Let $B_{{2}_{ij}}$--$B_{{1}_{ij}}$ be positive semidefinite and $U$ be  nontrivial solution of \eqref{no4}, then every solution of \eqref{no3} vanishes at some point 
of $\overline{\Omega}\times [0, \infty).$ In addition, if $C_{1}>0$ in $\overline{\Omega},$ then every nontrivial solution of \eqref{eqn1} vanishes at some point of $\Omega.$
\end{thm}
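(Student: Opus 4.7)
My plan is to observe that Theorem \ref{stm} is a direct consequence of the generalized Leighton theorem (Theorem \ref{lego}), obtained by verifying that the hypotheses of Theorem \ref{stm} force the variation $V(U)$ to be nonnegative pointwise in the integrand, and then invoking Theorem \ref{lego} verbatim.

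First I would recall the explicit formula
\[
V(U) = \int_{\Omega}\int_{0}^{\infty} \sum_{i,j} y^{a} (B_{2_{i,j}} - B_{1_{i,j}}) D_{i} U \, D_{j} U \, dX + 2 s c_{s} \int_{\Omega} (C_{1}(x) - C_{2}(x)) (U(x,0))^{2} \, dx,
\]
which was derived right before the theorem. The weight $y^{a}$ is positive on $(0,\infty)$ (since $a \in (-1,1)$), and by hypothesis the matrix $B_{2_{i,j}} - B_{1_{i,j}}$ is positive semidefinite on $\Omega$, so the quadratic form $\sum_{i,j}(B_{2_{i,j}} - B_{1_{i,j}}) D_{i} U\, D_{j} U$ is pointwise nonnegative in $\Omega \times (0,\infty)$. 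Hence the first integral is $\geq 0$. Similarly, the hypothesis $C_{1}(x) - C_{2}(x) \geq 0$ on $\overline{\Omega}$ combined with $(U(x,0))^{2} \geq 0$ makes the boundary term nonnegative. Adding these gives $V(U) \geq 0$.

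Now that $V(U) \geq 0$ and $U$ is a nontrivial solution of \eqref{no4}, Theorem \ref{lego} applies directly, yielding that every solution of \eqref{no3} vanishes at some point of $\overline{\Omega} \times [0, \infty)$. The additional conclusion, under the hypothesis $C_{1} > 0$ in $\overline{\Omega}$, is already built into the statement of Theorem \ref{lego} (which itself relies on the second part of Lemma \ref{lto}), so it transfers to \eqref{eqn1} without further work.

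There is really no substantive obstacle here: the theorem is a corollary of Theorem \ref{lego}, and all the technical work (the Picone-type identity, the positivity of the heat kernel, the Dirichlet-to-Neumann interpretation) has already been carried out in the proof of Lemma \ref{lto}. The only thing to verify is the sign of each of the two pieces of $V(U)$, which is immediate from the semidefiniteness and sign hypotheses. Thus my proposed write-up would be just a few lines long.
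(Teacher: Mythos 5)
Your proof is correct and takes exactly the same route as the paper: observe that both pieces of $V(U)$ are nonnegative under the given positive-semidefiniteness and sign hypotheses, conclude $V(U) \geq 0$, and invoke Theorem~\ref{lego}. The only difference is that you spell out the pointwise nonnegativity of each integrand in slightly more detail than the paper's one-line argument.
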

\begin{proof}
Since $B_{{2}_{ij}}$--$ B_{{1}_{ij}}$ is positive semidefinite and $C_{1}(x)- C_{2}(x)\geq 0 $ on $\overline{\Omega},$  so $V(U)\geq 0$ and the proof follows from Theorem\,\ref{lego}.

\end{proof}

\section{A few remarks }
 A few remarks concerning the qualitative behavior of the solution to fractional Laplace equations are in order:
 
 \begin{rem}
  Let $\Omega(r_0)= \{x\in \R^{n}:\,\,||x|| \geq r_0    \}$ for some $r_0>0$ be an exterior domain in $\R^n,$ where $||\,\,\cdotp\,||$ is the usual Euclidean norm in $\R^n.$
  Now, a very first question concerns  whether one can pose \eqref{pr} in exterior domains. If yes, then a
   nontrivial solution $u$ of \eqref{pr} (posed in exterior domains) is said to be oscillatory if the set $\{x\in \Omega(r_0):\,\,u(x)=0  \}$ is unbounded; otherwise it is 
   called non-oscillatory,
  see for instance, pp.\,135\,\cite{xu}. Equation \eqref{pr} is called oscillatory if all its solutions are oscillatory. In this context, 
  it is natural to look at the whole study of this paper in the exterior/unbounded domains. 
 \end{rem}

 The next remark deals with an evidence of the oscillatory behavior of the solution of \eqref{pr} in $\R.$
 \begin{rem}
 Let us consider \eqref{pr} in $\R$ with $A(x)= 1$ and $C=4,\,\,s\in (0, 1)$ and consider the radially symmetric solutions of 
 \begin{equation}\label{r1}
 -(-\Delta)^{s} u + 4 u=0\,\,\text{in}\,\,\,\,\R.
 \end{equation}
 We recall that, when $u$ is radially symmetric, 
   \begin{equation}\label{rad}
   -(-\Delta)^{s} u= u''(r) + (n+1-2s) \frac{u'(r)}{r}, \,\,u(x)=u(|x|)= u(r),\,\,\,x\in \R^n,          
 \end{equation}
 see \cite{fer} for the details and by \eqref{rad}, \eqref{r1} converts into

\begin{equation}\label{r2}
u''(r) + 2(1-s) \frac{u'(r)}{r}+ 4 u = 0.
\end{equation}
Now, using the standard transformation 
$$ u(r)= y(r) e^{-\frac{1}{2} \int \frac{2 (1-s)}{r}dr  }= y(r) r^{s-1},\,\,r>0, $$
\eqref{r2} is

\begin{equation}\label{r3}
 y''(r) + \left(\frac{4 r^2 - s^2 +s}{r^2}\right) y(r)=0.
\end{equation}
Since $s\in (0, 1),$ so $\frac{4 r^2 - s^2 +s}{r^2} >4$ and by classical Sturm's comparison theorem, \eqref{r3} is oscillatory. Since the above transformation is 
oscillation preserving, and therefore \eqref{r2} is oscillatory. Now, it will be of interest to find out  whether every solution of \eqref{r1} and more generally, to \eqref{pr},
when posed in $\R^n,$ is oscillatory.

 \end{rem}
 
 In the next remark, one can also inquire on the non-oscillatory solution (eventually one signed solution) of \eqref{pr} in $\R.$
 \begin{rem}
 Let us consider \eqref{pr} in $\R$ with $A(x)= 1$ and $C=-1,\,\,s\in (0, 1)$ and consider the radially symmetric solutions  of 
 \begin{equation}\label{rnon}
 -(-\Delta)^{s} u -  u=0\,\,\text{in}\,\,\,\,\R.
 \end{equation}
 Again by \eqref{rad}, \eqref{rnon} converts into

\begin{equation}\label{rn2}
u''(r) + 2(1-s) \frac{u'(r)}{r}-  u = 0.
\end{equation}
Now, using the standard transformation 
$$ u(r)= y(r) e^{-\frac{1}{2} \int \frac{2 (1-s)}{r}dr  }= y(r) r^{s-1},\,\,r>0, $$
\eqref{rn2} reads as
\begin{equation}\label{rn3}
 y''(r) + \left(\frac{-r^2 - s^2 +s}{r^2}\right) y(r)=0.
\end{equation}
It is easy to see that  $\frac{1}{4 r^2} > \frac{-r^2 - s^2 +s}{r^2}$ and we know that 
\begin{equation}\label{rno}
 y''(r) + \frac{1}{4r^2} y(r)=0
\end{equation}
is non-oscillatory so by classical Sturm's comparison theorem, \eqref{rn3} is non-oscillatory and 
therefore \eqref{rn2} is non-oscillatory. Now, it will be of interest to investigate whether  \eqref{rnon} and more generally, \eqref{pr},
when posed in $\R^n,$ is non-oscillatory.
 \end{rem}
 

\vskip .002in

\begin{center}
{\bf Acknowledgment} \end{center}
Author thanks DST/SERB for the financial support under the grant EMR/2015/001908.

\end{document}